\documentclass{amsart}
\usepackage{amsfonts}
\usepackage{latexsym,amssymb}
\usepackage{graphicx}

\usepackage[active]{srcltx}

\newtheorem{theorem}{Theorem}
 
 \newtheorem{lemma}[theorem]{Lemma}
 \newtheorem{proposition}[theorem]{Proposition}
\theoremstyle{definition}

  \DeclareMathOperator{\ii}{i} 
 \DeclareMathOperator{\ou}{ou} \DeclareMathOperator{\inn}{in}

 \DeclareMathOperator{\rev}{rev} 
\DeclareMathOperator{\tr}{tr} 
\DeclareMathOperator{\diag}{diag} \DeclareMathOperator{\supp}{supp}

\newcommand{\Real}{\mathbb{R}}
\newcommand{\Comp}{\mathbb{C}}
\newcommand{\Nat}{\mathbb{N}}

\newcommand{\Ee}{\mathbb{E}}

\newcommand{\eps}{\varepsilon}
\newcommand{\dts}{,\dots,}

\newcommand{\sbs}{\subset}





\newcommand{\set}[1]{\left\{#1\right\}}


\newcommand{\matp}[1]{\begin{bmatrix} #1 \end{bmatrix}}

\usepackage{color}

\begin{document}

   \title[  ]
   {On the spectral properties of a class of $H$-selfadjoint random matrices and the underlying combinatorics}
   \author{ Patryk Pagacz, Micha\l{} Wojtylak}
   \address{
   Faculty of Mathematics and Computer Science\\
   Jagiellonian University\\
   \L ojasiewicza 6\\
   30-348 Krak\'ow\\
   Poland}
   \email{patryk.pagacz@gmail.com, michal.wojtylak@gmail.com
}

\begin{abstract}
An expansion of the Weyl function of a $H$-selfadjoint random matrix with one negative square is provided. It is shown that the coefficients converge to a certain generalization of Catlan numbers. Properties of this generalization are studied, in particular,  a combinatorial interpretation is given.
\end{abstract}
  \subjclass[2000]{Primary 15B52, Secondary 15B57, 05A19}
\keywords{ Wigner matrix, $H$-selfadjoint matrix, eigenvalue of nonpositive type, Catalan numbers}
\thanks{ The second author was supported by the Polish Ministry of Science and Higher Education with a Iuventus Plus grant IP2011 061371.}

\maketitle

\section*{Introduction}

The main object of the present paper are the limit properties of the following product  of a random and deterministic matrices
$$
X_N^{(d)}=H_N^{(d)}W_N,
$$
where $W_N$ stands for the Wigner matrix,
$$
 H^{(d)}_{N}=\matp{d & 0 \\ 0 & I_{N}}
$$
 and $d$ is a nonrandom real parameter. Note that for $d\ge0$ the matrix $X_N^{(d)}$ is similar to the symmetric matrix
 $$
 H_N^{(\sqrt d)}W_N H_N^{(\sqrt d)},
 $$
and hence, it has real spectrum only. The statistical properties of perturbed  symmetric random matrices were studied in many papers e.g. \cite{BG1,BG2,BG3,cap1,capitaine,FP,knowles} and the theory is well established. However, to our knowledge there is very less known about the spectral properties of $X_N^{(d)}$ in the case $d<0$.
Note that in this case $X_N^{(d)}$ is  $H_N^{(1/d)}$-selfadjoint, which allows to apply the indefinite linear algebra theory. We refer the reader to \cite{GLR} as a basic reference for $H$-selfadjoint matrices and to \cite{Wojtylak12b}, where the indefinite linear algebra was linked with randomness.

To track the (possibly complex)  eigenvalues of the matrix $X^{(d)}_N$ the Weyl function
 $$
Q_{(d)}^N (z)=-\left(e_0^*H_N^{(d)}\left(X^{(d)}_N-z\right)^{-1}e_0\right)^{-1}=-\frac1d \left(e_0^*\left(X^{(d)}_N-z\right)^{-1}e_0\right)^{-1}.
$$
is considered. The zeros of $Q_{(d)}^N(z)$ are necessarily eigenvalues of $X^{(d)}_N$ by the Schur complement reasoning.
This approach corresponds to the  technique of Weyl function, or $m$--function in operator theory, see e.g. \cite{DHS1, DHS3, KL71,KL77,KL81,SWW} for the applications in the indefinite inner product setting.

Note that as $X_N^{(d)}$ is a random matrix, the function  $Q_{(d)}^N(z)$ is random as well. The main object  of the present paper are  properties  of the function $Q_{(d)}^N(z)$ for large matrices. In particular, Theorem \ref{probexpansion}
says that $Q_{(d)}^N(z)$ converges in probability with $N\to\infty$ to
 $$
 Q_{(d)}(z)= \hat\sigma(z)+\frac{z}{d}= \frac{(2-d)z+d\sqrt{z^2-4}}{2d},
$$
 where $\hat\mu(z)$ denotes the Stieltjes transform of a measure $\mu$ and $\sigma$ the Wigner semicircle measure.
This allows to determine the limit behavior of the eigenvalues of $X_N^{(d)}$ outside $[-2,2]$ as a functions of the real parameter $d$. Namely, for $d\in (-\infty,0)$ the equation $Q_{(d)}(z)=0$ has two complex solutions
 $$
 z_{(d)}^\pm=\pm \frac{d}{\sqrt{1-d}}\ii.
 $$
 For $d\in[0,2]$ the equation $Q_{(d)}(z)=0$ has no solutions outside $[-2,2]$. For $d\in(2,\infty)$ the equation $Q_{(d)}(z)=0$ has two real solutions outside $[-2,2]$
$$
z_{(d)}^\pm=\pm \frac{d}{\sqrt{d-1}}.
$$
Summarizing, it follows that for $d\in(-\infty,0)\cup(2,\infty)$ there are precisely two eigenvalues of $X_{(d)}^N$ with limits in $\Comp\setminus[-2,2]$, the limits being  $z_{(d)}^\pm$, see Theorem \ref{probexpansion} for details.

 The idea of the present paper is to provide a combinatorial interpretation of $Q_{(d)}(z)$ in the spirit of the original Wigner's calculations. This includes, in particular, providing the following expansion
$$
\frac{-1}{ Q_{(d)}(z)}=-\frac{d}{z}\sum\limits_{n=0}^{\infty}\frac{1}{z^{2n}} \pi^{(d)}_n.
 $$
The numbers $\pi_n^{(d)}$ are a generalization of the Catalan numbers and have a natural interpretation in terms of Dyck paths or noncrossing partitions. They appear   in the study of the $t$--transformation of a measure or a free convolution  \cite{bozjwys98,bozjwys01} and deformation of free Gaussian random variables \cite{jwys05,jwys05b}. This issue is further discussed in Section \ref{S1} and in the closing remarks.

The paper is organized in a reverse order, compared to the presentation above.
In Section \ref{S1} we  define the numbers $\pi^{(d)}_n$ and show their basic properties: generating function, relation to Catalan numbers and  closed formulas as polynomials in $d$.
Section \ref{S2} is devoted to computing the limit in probability of the moments of the function  $-1/Q_N^{(d)}$. Namely, it is shown in Theorem \ref{central} that $e_0^*\left(X_N^{(d)}\right)^ne_0$
 converges to zero if $n$ is odd and to $\pi^{(d)}_{n/2}$ if $n$ is even. The result is then used in Section \ref{S3} to prove  the aforementioned Theorem \ref{probexpansion} on the limit of $Q_{(d)}^N$  and the behavior of the eigenvalues of $X_N^{(d)}$ for large $N$. In the last section we discuss the limitations of this method of study of spectra of $H$-selfadjoint random matrices. The main results of the paper are Theorem \ref{central} and Theorem \ref{probexpansion}.

The authors are grateful to Marek Bo\.zejko, Anna Kula and Janusz Wysocza\'nski for inspiring discussions and  helpful  comments.

\section{The numbers $\pi^{(d)}_n$ and their relation to Catalans }\label{S1}


A \emph{Dyck path} of order $n$ is a walk from $(0,0)$ to $(2n,0)$ in the upper-half plane,  consisting of vectors $[1,1]$ and $[1,-1]$. The set of all Dyck paths of order $n$ will be denoted by $D_n$. A classical result says that
the cardinality $c_n$ of $D_n$ satisfies
\begin{equation}\label{Catalandef}
c_n= \frac{1}{n+1}\binom{2n}{n},
\end{equation}
which can be  seen by the  recurrence formula
\begin{equation}
 \label{Catalanrec}
 c_0=1,\quad c_n = \sum_{j=1}^{n}  c_{j-1} c_{n-j}, \quad n=1,2,\dots.
\end{equation}
The numbers $c_n$ are called the \textit{Catalan numbers}.

If $w$ is a Dyck path then by $\xi(w)$ we denote the number of meetings of $w$ with the $x$-axes, excluding the point $(2n,0)$.  We define
\begin{equation}
 \label{pidef}
\pi^{(d)}_0=1,\quad \pi^{(d)}_n=\sum_{w\in D_n} d^{\xi(w)}, \quad n=1,2,\dots.
\end{equation}
Clearly, $\pi^{(1)}_n=c_n$, for $\pi_n^{(-1)}$ see the end of this section.

The numbers $\pi_n^{(d)}$ can be interpreted in the language of non-crossing 2-partitions as follow.
Let $\mathcal{NC}_2(2n)$ denote the set of all non-crossing partitions with two-element blocks only. For a non-crossing partition
$\nu\in \mathcal{NC}_2(2n)$ with blocks $\nu = \{B_1,B_2, . . .,B_n\}$ a block $B_j = \{s_1, s_2\}$ is called
\emph{outer} if there is no block $B_i=\{s_3,s_4\}$ such that   $s_3 < s_1, s_2 < s_4$. Blocks which do not enjoy this property are called \emph{inner}.
By $\sharp \ou(\nu)$ ($\sharp \inn(\nu)$) we denote the number of outer (resp. inner) blocks of a partition $\nu$.
Using this notations we can write
\begin{equation}
\label{piout}
\pi_n^{(d)}=\sum_{\nu\in \mathcal{NC}_2(2n)} d^{\sharp\ou(\nu)}.
\end{equation}
In \cite{bozjwys98} the numbers
\begin{equation}
\label{C_n(d)}
C_n(d)=\sum_{\nu\in \mathcal{NC}_2(2n)} d^{\sharp \inn(\nu)}
\end{equation}
were considered as moments of the central limit measure for the $t$-transformed free convolution, see \cite{BSp96} for a  general form of a non-commutative central limit theorem and  \cite{bozjwys01} for other examples.
One sees that both $\pi_n^{(d)}$ and $C_n(d)$ are polynomials in $d$. Furthermore, by $\inn(w)+\ou(w)=n$ one has
\begin{equation}\label{rev}
\pi_n^{(d)}=\rev(C_n(d)),
\end{equation}
 where
$\rev(\sum_{i=0}^{n}a_id^i)=\sum_{i=0}^{n}a_id^{n-i}$.

The basic recurrence relation for the numbers $\pi^{(d)}_n$ is the following.
\begin{lemma}
For $d\in\Real\setminus\set{0}$
\begin{equation}\label{rekPI}
 \pi^{(d)}_n = d\sum_{k=1}^{n}  c_{k-1} \pi^{(d)}_{n-k},\quad n=1,2\dots.
\end{equation}
\end{lemma}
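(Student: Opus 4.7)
The plan is to prove the recurrence by the standard first-return decomposition of Dyck paths, tracking carefully how the statistic $\xi(w)$ splits.

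First I would fix $n \ge 1$ and a Dyck path $w \in D_n$. Let $2k$ be the smallest positive integer with $w$ touching the $x$-axis at $(2k,0)$; such $k$ exists and lies in $\{1,\dots,n\}$, because $w$ ends at $(2n,0)$. Then $w$ decomposes uniquely as
\begin{equation*}
w \;=\; [1,1]\cdot \widetilde w_1 \cdot [1,-1]\cdot w_2,
\end{equation*}
where $\widetilde w_1$ is the piece of $w$ between the initial up-step and the down-step finishing the first arch, and $w_2$ is the piece after $(2k,0)$. Shifting $\widetilde w_1$ down by one gives a Dyck path $w_1\in D_{k-1}$, and $w_2$, translated to start at the origin, is an element of $D_{n-k}$. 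Conversely, any pair $(w_1,w_2)\in D_{k-1}\times D_{n-k}$ produces in this way a unique Dyck path of order $n$ whose first return to the $x$-axis is at abscissa $2k$.

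Next I would track $\xi$. Since $\widetilde w_1$ lies strictly above the $x$-axis, the meetings of $w$ with the $x$-axis excluding $(2n,0)$ are exactly the initial point $(0,0)$ together with the meetings of $w_2$ (placed as a subpath of $w$ starting at $(2k,0)$) with the $x$-axis, excluding the terminal point $(2n,0)$. The latter count is, by definition, $\xi(w_2)$. Therefore
\begin{equation*}
\xi(w) \;=\; 1+\xi(w_2).
\end{equation*}
Summing over all $w\in D_n$ by grouping according to the value of $k$ yields
\begin{equation*}
\pi_n^{(d)} \;=\; \sum_{k=1}^{n}\,\sum_{w_1\in D_{k-1}}\,\sum_{w_2\in D_{n-k}} d^{1+\xi(w_2)}
\;=\; d\sum_{k=1}^{n} c_{k-1}\,\pi_{n-k}^{(d)},
\end{equation*}
which is precisely \eqref{rekPI}.

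I do not expect any real obstacle: the only point requiring care is the bookkeeping of the endpoints when counting $\xi$, and the observation that the middle piece $\widetilde w_1$ is strictly above the axis so it contributes nothing to $\xi(w)$. The assumption $d\ne 0$ in the statement is not actually used by this argument (for $d=0$ both sides vanish for $n\ge 1$), so it can be regarded as inessential.
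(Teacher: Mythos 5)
Your proof is correct and follows exactly the paper's argument: the first-return decomposition $w = [1,1]\cdot \widetilde w_1\cdot[1,-1]\cdot w_2$ with $w_1\in D_{k-1}$, $w_2\in D_{n-k}$, and the identity $\xi(w)=1+\xi(w_2)$. Your write-up is in fact slightly more explicit than the paper's about why the elevated middle piece contributes nothing to $\xi$, but there is no substantive difference.
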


\begin{proof}
We use a standard argument of considering the first intersection with the $x$--axes. Each Dyck path $w\in D_n$  is uniquely determined as a concatenation    a Dyck path $w_1$ of order $k\leq n$ with $\xi(w_1)=1$ and a Dyck path $w_2$ form $(2k,0)$ to $(2n,0)$.
The path $w_1$ consists of the vector $[1,1]$, some Dyck path $w_1'$ from $(1,1)$ to $(2k-1,1)$ of order $k-1$ and the vector $[1,-1]$.
The Dyck paths $w_1'\in D_{k-1}$ and $w_2\in D_{n-k}$ uniquely determine $w$.  
Hence
%
 $$
\pi^{(d)}_n=\sum_{k=1}^n \sum_{ w'_1\in D_{k-1}} \sum_{w_2\in D_{n-k}} d\cdot d^{\xi(w_2)} =
d\sum_{k=1}^{n}  c_{k-1} \pi^{(d)}_{n-k}.
$$

\end{proof}

The following result can be easily obtained from the results in  \cite{jwys05}, where similar calculations were derived for $d$ in the operator theory setting,  see also \cite{jwys05b} for generalizations.
For the completness of the presentation we include an elementary proof.

\begin{proposition}\label{gener}
Let $d\in\Real\setminus\set0$. The generating function $G_{(d)}(z)$ of the sequence $\big(\pi^{(d)}_n\big)_{n=0}^\infty$
satisfies
$$
G_{(d)}(z)=\frac{1}{1-zd F(z)}=\frac{2}{2-d+d\sqrt{1-4z}},
$$
where $F(z)=\frac{1-\sqrt{1-4z}}{2z}$ is the generating function for the Catalan numbers.
\end{proposition}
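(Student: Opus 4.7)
The plan is to translate the recurrence \eqref{rekPI} directly into a functional equation for the generating function. Multiplying both sides of \eqref{rekPI} by $z^n$ and summing over $n\ge 1$, the left-hand side becomes $G_{(d)}(z)-\pi_0^{(d)}=G_{(d)}(z)-1$. On the right-hand side, swapping the order of summation gives a Cauchy product
$$
d\sum_{k=1}^{\infty} c_{k-1} z^{k} \sum_{m=0}^{\infty} \pi^{(d)}_m z^{m} = d\cdot zF(z)\cdot G_{(d)}(z),
$$
since $zF(z)=\sum_{k\ge 1} c_{k-1} z^{k}$. This yields $G_{(d)}(z)\bigl(1-dz F(z)\bigr)=1$, which is the first asserted identity.

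For the second equality I would simply substitute the standard closed form $zF(z)=(1-\sqrt{1-4z})/2$, obtaining
$$
1-dz F(z) = \frac{2-d+d\sqrt{1-4z}}{2},
$$
and invert. The only point needing a word of justification is the legitimacy of the series manipulations; this is handled either formally (each coefficient of the power series involves only finitely many terms) or analytically by noting that the trivial bound $|\pi^{(d)}_n|\le c_n \max(1,|d|)^n$ gives $G_{(d)}(z)$ a positive radius of convergence near the origin, so all the identities hold as identities of analytic functions on a common neighbourhood of $0$.

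There is no genuine obstacle here: the statement is essentially the generating-function transcription of the Lemma, and the closed form falls out by elementary algebra once the Catalan generating function $F(z)$ is plugged in.
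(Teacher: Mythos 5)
Your proposal is correct and follows essentially the same route as the paper: both translate the recurrence \eqref{rekPI} into the functional equation $G_{(d)}(z)-1=d\,zF(z)G_{(d)}(z)$ via the Cauchy product and then invert. Your added remarks on the convergence of the series and the explicit substitution of $zF(z)=\bigl(1-\sqrt{1-4z}\bigr)/2$ merely spell out details the paper leaves implicit.
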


\begin{proof}
Using the formula \eqref{rekPI} we obtain
$$
G_{(d)}(z)-1=\sum_{n=1}^{\infty}\pi^{(d)}_nz^n=d\sum\limits_{n=1}^{\infty}\sum\limits_{k=1}^{n}\pi^{(d)}_{n-k}c_{k-1}z^{n-1}z = dG_{(d)}(z)F(z)z.
$$

\end{proof}

Using the Lemma 1 we can show a simpler recurrence formula for $\pi_n^{(d)}$.

\begin{lemma}\label{recpi2L}
For $d\in\Real\setminus\set{0,1}$ one has
\begin{equation}\label{rekPI2}
\pi^{(d)}_n= \frac{-d^2}{1-d}\pi^{(d)}_{n-1}+\frac{d}{1-d}c_{n-1}, \quad n = 1,2\dots.
\end{equation}
\end{lemma}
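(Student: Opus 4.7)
The plan is to translate the claim into a single equivalent identity between formal power series in $z$ and verify it using Proposition~\ref{gener} together with the quadratic relation $F(z)=1+zF(z)^2$ satisfied by the Catalan generating function.

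First, I would clear the denominator: \eqref{rekPI2} is equivalent to
\[
(1-d)\pi^{(d)}_n + d^2\pi^{(d)}_{n-1} = d\,c_{n-1}, \qquad n\geq 1.
\]
Multiplying by $z^n$ and summing over $n\geq 1$, and taking into account that $\pi^{(d)}_0=c_0=1$, this turns into the generating-function identity
\[
\bigl((1-d) + d^2 z\bigr)\,G_{(d)}(z) = (1-d) + dz\,F(z).
\]
So it suffices to verify this single identity (as formal power series, or equivalently as analytic functions near the origin).

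Next, I would invoke Proposition~\ref{gener} in the form $\bigl(1-zdF(z)\bigr)G_{(d)}(z)=1$ and multiply the target identity by $1-zdF(z)$, reducing it to
\[
(1-d) + d^2 z \;=\; \bigl((1-d) + dz F(z)\bigr)\bigl(1-zdF(z)\bigr).
\]
Expanding the right-hand side yields $(1-d) + d^2 zF(z) - d^2 z^2 F(z)^2$, and it remains to check that $d^2\bigl(zF(z)-z^2 F(z)^2\bigr)=d^2 z$. This is exactly the Catalan fixed-point equation $F(z)=1+zF(z)^2$, rewritten after multiplication by $z$ as $zF(z)-z^2F(z)^2=z$.

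The main obstacle is purely bookkeeping: keeping track of the index shift and the $n=0$ correction $(1-d)(G_{(d)}(z)-1)$ when translating the recurrence into generating functions, and avoiding sign errors in the expansion. The assumption $d\neq 1$ is needed only to permit dividing by $1-d$ in the final statement \eqref{rekPI2}; the intermediate identity $(1-d)\pi^{(d)}_n+d^2\pi^{(d)}_{n-1}=d c_{n-1}$ remains valid for all $d$. A quick sanity check at $n=1$ (where $\pi_1^{(d)}=d$ and $\frac{-d^2}{1-d}+\frac{d}{1-d}=\frac{d(1-d)}{1-d}=d$) confirms the formula.
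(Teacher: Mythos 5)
Your argument is correct, but it takes a genuinely different route from the paper. The paper proves \eqref{rekPI2} by induction on $n$, substituting the inductive hypothesis into the basic recurrence $\pi^{(d)}_{n+1}=d\sum_{j=0}^{n}c_{n-j}\pi^{(d)}_j$ and collapsing the resulting double sum with the Catalan recurrence \eqref{Catalanrec}. You instead encode the claim as the single generating-function identity $\bigl((1-d)+d^2z\bigr)G_{(d)}(z)=(1-d)+dzF(z)$, invoke Proposition~\ref{gener} in the form $\bigl(1-zdF(z)\bigr)G_{(d)}(z)=1$ (legitimate, since that proposition precedes the lemma), and reduce everything to the Catalan fixed-point equation $F(z)=1+zF(z)^2$; since $1-zdF(z)$ is invertible as a formal power series, each step is an equivalence. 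Both proofs ultimately rest on the same combinatorial input \eqref{rekPI} --- yours via Proposition~\ref{gener}, the paper's directly --- but your version trades the index bookkeeping of the induction for a one-line algebraic verification, and it makes transparent the useful observation that the denominator-free form $(1-d)\pi^{(d)}_n+d^2\pi^{(d)}_{n-1}=d\,c_{n-1}$ is a polynomial identity valid for all $d$, with $d\neq 1$ needed only to solve for $\pi^{(d)}_n$. The paper's induction is more elementary and self-contained, requiring no formal power series manipulations.
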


\begin{proof}
First observe  that
$$
\frac{-d^2}{1-d}\pi^{(d)}_{0}+\frac{d}{1-d}c_{0}=\frac{-d^2}{1-d}+\frac{d}{1-d}=d=\pi^{(d)}_1 .
$$
Now let us assume that \eqref{rekPI2} holds for all  $j\le n$, where $n\geq1$ is fixed. Then

$$
\pi^{(d)}_{n+1}= d\sum\limits_{j=0}^{n} c_{n-j}\pi^{(d)}_j = d\sum\limits_{j=1}^{n} c_{n-j}\pi^{(d)}_j + dc_n =
$$ $$
 d\sum\limits_{j=1}^{n} c_{n-j}\Big(\frac{-d^2}{1-d}\pi_{j-1}^{(d)}+\frac{d}{1-d}c_{j-1}\Big) +d c_n =
$$ $$
 \frac{-d^2}{1-d}d\sum\limits_{j=1}^{n} c_{n-j} \pi_{j-1}^{(d)} + d\frac{d}{1-d}\sum\limits_{j=1}^{n} c_{n-j}c_{j-1} +d c_n =
$$ $$
\frac{-d^2}{1-d} \pi^{(d)}_n + d\frac{d}{1-d}c_n+dc_n = \frac{-d^2}{1-d}\pi^{(d)}_n+\frac{d}{1-d}c_n.
$$

\end{proof}

The following proposition shows a closed formula for $\pi_n^{(d)}$.
The Catalan triangle $[t_{n,k}]_{n,k=0}^\infty$ (see  \cite[A009766]{OEIS}) is defined as
$$
t_{0,k}=\delta_{0,k},\quad t_{n,k}=\sum_{j=0}^k t_{n-1,j}=
\left( \binom{n+k}{k+1}-\binom{n+k}{k}\right).
$$

\begin{proposition}
The numbers $\pi_n^{(d)}$ satisfy
$$
\pi_0^{(d)}=1,\quad \pi_n^{(d)}=\sum_{k=1}^n t_{n-1,n-k} d^{k}\ (n>0).
$$
\end{proposition}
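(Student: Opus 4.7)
The plan is to derive the closed form directly from the generating-function identity of Proposition \ref{gener}. Expanding the right-hand side as a geometric series,
$$
G_{(d)}(z) = \frac{1}{1-zdF(z)} = \sum_{k=0}^{\infty} d^k z^k F(z)^k,
$$
and reading off the coefficient of $z^n$ (the $k=0$ contribution vanishes for $n\ge 1$) gives
$$
\pi_n^{(d)} = \sum_{k=1}^{n} d^k\, [z^{n-k}] F(z)^k.
$$
The proposition is therefore reduced to the identity $[z^{n-k}] F(z)^k = t_{n-1,\, n-k}$ for $1\le k\le n$.

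For this, invoke the classical coefficient formula for powers of the Catalan generating function,
$$
[z^s] F(z)^k = \frac{k}{s+k}\binom{2s+k-1}{s},
$$
which is most easily obtained from Lagrange inversion applied to the quadratic equation $F(z) = 1 + zF(z)^2$ satisfied by $F$. Setting $s = n-k$ yields $[z^{n-k}]F(z)^k = \tfrac{k}{n}\binom{2n-k-1}{n-k}$, and a short binomial manipulation
$$
\binom{2n-k-1}{n-k} - \binom{2n-k-1}{n-k-1} = \binom{2n-k-1}{n-k}\cdot \frac{k}{n}
$$
identifies this quantity with the closed form of $t_{n-1, n-k}$ given in the statement.

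A combinatorial alternative more in the spirit of the preceding section is to reinterpret $(zF(z))^k$ directly: since $zF(z) = \sum_{m\ge 1} c_{m-1} z^m$ is the generating function of single elevated Dyck paths (arches meeting the $x$-axis only at their endpoints), $(zF(z))^k$ enumerates Dyck paths decomposed into exactly $k$ arches, so its coefficient of $z^n$ counts those $w\in D_n$ with $\xi(w)=k$; grouping the terms in the definition \eqref{pidef} of $\pi_n^{(d)}$ by the value of $\xi(w)$ leads to the same identity once the ballot number on the right is recognized. The main obstacle is the coefficient extraction for $F(z)^k$; this is standard but does require either Lagrange inversion or the cycle lemma, after which the identification with $t_{n-1,n-k}$ is only bookkeeping.
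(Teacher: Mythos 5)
Your argument is correct, but it takes a genuinely different route from the paper's. The paper never returns to the generating function here: it observes from \eqref{rekPI} that $\pi_n^{(d)}=\sum_{k=1}^n a_{n-1,n-k}d^k$ for some coefficients $a_{n,k}$, feeds this ansatz into the two-term recurrence of Lemma \ref{recpi2L}, uses $\pi_{n-1}^{(1)}=c_{n-1}$ to re-expand the Catalan term, and, after summing the geometric factor $\frac{-d^{k+2}+d}{1-d}=d+d^2+\cdots+d^{k+1}$, compares coefficients to get $a_{n-1,n-l}=\sum_{j=0}^{n-l}a_{n-2,j}$ --- precisely the recurrence defining the Catalan triangle --- so the identification is a self-contained induction in which no binomial coefficient ever appears. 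You instead extract $[z^n]$ from $G_{(d)}(z)=\sum_{k\ge0}d^kz^kF(z)^k$ (Proposition \ref{gener}) and invoke $[z^s]F(z)^k=\frac{k}{s+k}\binom{2s+k-1}{s}$ via Lagrange inversion applied to $F=1+zF^2$; the coefficient extraction, the specialization $s=n-k$, and the binomial identity you check are all correct. What your route buys is the explicit ballot-number value $t_{n-1,n-k}=\frac{k}{n}\binom{2n-k-1}{n-k}$ and, through your second variant, the transparent combinatorial content that $t_{n-1,n-k}$ counts the paths $w\in D_n$ with exactly $k$ arches, i.e.\ $\xi(w)=k$, which ties the proposition directly back to the definition \eqref{pidef}; the price is importing Lagrange inversion or the cycle lemma, which the paper's elementary induction avoids.

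One bookkeeping caveat: the expression you land on, $\binom{2n-k-1}{n-k}-\binom{2n-k-1}{n-k-1}$, is the correct ballot number determined by the recurrence $t_{n,k}=\sum_{j=0}^{k}t_{n-1,j}$ (equivalently $t_{n,k}=\binom{n+k}{k}-\binom{n+k}{k-1}$), but it does not literally match the closed formula $\binom{n+k}{k+1}-\binom{n+k}{k}$ printed in the paper's definition of $t_{n,k}$ --- that printed formula vanishes already at $(n,k)=(1,0)$ and is an off-by-one misprint. Your identification is with the recurrence, which is the substantive definition, so this does not affect the validity of your proof; just be aware that you are not matching the paper's displayed closed form verbatim.
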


The proof can be obtained from the relation  \eqref{rev} and the formula
$$
C_n(d) = 1+
\sum^{n-2}_{k=0} d^{k+1}t_{n,k},
$$
see Proposition 6.1 in \cite{bozjwys01}. However, we present a simple argument using Lemma \ref{recpi2L}.
\begin{proof}
By formula \eqref{rekPI} it is clear that
$$
\pi_n^{(d)}=\sum_{k=1}^n a_{n-1,n-k} d^{k}\ (n>0),
$$
with some coefficients $a_{n,k}$. Since $\pi_{n-1}^{(1)}=c_{n-1}$, we have
$$
\sum_{k=1}^{n-1} a_{n-2,n-1-k}=c_{n-1},\quad n>1.
$$
Using this and Lemma \ref{recpi2L} we obtain for $n>1$
$$
\pi_n^{(d)} = \frac{-d^2}{1-d}\pi^{(d)}_{n-1}+\frac{d}{1-d}c_{n-1}=
$$ $$
 \frac{-d^2}{1-d}\sum_{k=1}^{n-1} a_{n-2,n-1-k} d^{k}+\frac{d}{1-d}c_{n-1}=
$$ $$
\sum_{k=1}^{n-1} \left(\frac{-d^{k+2}}{1-d} +\frac{d}{1-d} \right)   a_{n-2,n-1-k} =
$$  $$
\sum_{k=1}^{n-1} \left( d+ d^2 +\cdots +d^{k+1}\right)   a_{n-2,n-1-k}.
$$
Comparing the coefficients of polynomials on the left and right hand side of the above we obtain for $l=1\dts n$
$$
a_{n-1,n-l}=\sum_{k=l-1}^{n-1} a_{n-2,n-1-k}=\sum_{j=0}^{n-l} a_{n-2,j}.
$$
This, together with the information that $\pi_1^{(d)}=d$, proves the result.
\end{proof}

To finish the section, let us show that   $\pi^{(-1)}_n=(-1)^na_n$ ($n=0,1\dots$), where
 $$
a_n=\left(\frac12\right)^n\left(1+\sum\limits_{k=0}^{n-1} c_k(-2)^k\right) ,\quad  n=0,1\dots,
$$
 are the generalized Catalan numbers $C(-1,n)$ (see \cite{BH} and the OEIS database \cite{OEIS} number A064310).
Indeed, for $d=-1$ the formula \eqref{rekPI2} is of the form $$\pi^{(-1)}_n= -\frac{\pi^{(-1)}_{n-1}+c_{n-1}}{2}.$$
On the other hand,  we have
 $$
(-1)^na_n=-\frac12\left(\left(-\frac12\right)^{n-1}\left(1+\sum\limits_{k=0}^{n-2} c_k(-2)^k\right)
+\left(-\frac12\right)^{n-1}c_{n-1}(-2)^{n-1}\right)=
$$ $$
-\frac{(-1)^{n-1}a_{n-1}+c_{n-1}}{2}.
$$
Finally, to conclude $\pi_n^{(-1)}=(-1)^na_n$  it is enough to see that $\pi_0^{(-1)}=1=a(0)$.

\section{Wigner matrices with one negative square}\label{S2}

In what follows
$$
W_N=\frac{1}{\sqrt{N}}[x_{ij}]_{ij=0}^N
$$
stands for the Wigner matrix, that is a random matrix with  entries $x_{ij}$ ($0\leq i \leq j \leq N$)  being  real, independent, zero mean, the off-diagonal entries $x_{ij}$ ($0\leq i < j \leq N$) being identically distributed, and the diagonal entries  $x_{ii}$ ($0\leq i \leq N$) being identically distributed. For simplicity we assume that the variance of the off-diagonal entries is one. Moreover, we assume that
\begin{equation}\label{moments}
 r_k:=\max\set{|x_{00}|^k,|x_{11}|^k } < +\infty
\end{equation}
We set
$$
H^{(d)}_{N}=\matp{d & 0 \\ 0 & I_{N}},\quad X_N^{(d)}=H_N^{(d)}W_N,
$$
where $d$ is a nonrandom, nonzero, real parameter. 

\begin{theorem}\label{central}
Let the random matrix $X_N^{(d)}$ satisfy the above probabilistic assumptions. Then, for $d\in\Real\setminus\set0$
 $$
e_0^*\left(X_N^{(d)}\right)^{n}e_0 \to \begin{cases} \pi^{(d)}_{n/2}  &:\  n\text{ \rm even }\\
                                                                   0  &:\  n\text{ \rm  odd }
                                       \end{cases}
\quad (N\to \infty)
$$ 
in $L^2(\mathbb{P})$ and, in particular, in probability.
\end{theorem}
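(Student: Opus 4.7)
The plan is to apply the Wigner moment method to $Y_N := e_0^*(X_N^{(d)})^n e_0$: compute the first moment to identify the claimed limit, and show that the variance tends to zero, which together give $L^2$ convergence. Since $H_N^{(d)}$ is diagonal with entry $d$ at position $(0,0)$ and $1$ elsewhere, a direct expansion yields
\begin{equation*}
Y_N=\frac{1}{N^{n/2}}\sum_{\vec i} d^{\,\chi(\vec i)}\prod_{k=0}^{n-1} x_{i_k i_{k+1}},\qquad \chi(\vec i):=\#\{0\le k<n:\, i_k=0\},
\end{equation*}
the sum running over closed walks $\vec i=(i_0,\dots,i_n)$ in $\{0,\dots,N\}$ with $i_0=i_n=0$.

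For the first moment, when $n$ is odd each $x_{ij}$ must appear with multiplicity at least $2$ (zero mean) for a nonzero contribution, which caps the number of distinct entries at $(n-1)/2$ and yields $\Ee[Y_N]=O(N^{-1/2})\to 0$. When $n=2m$, the leading $O(1)$ contribution comes, as in the classical Wigner calculation, from walks whose $m$ distinct edges each appear exactly twice and form a tree on $m+1$ labeled vertices in $\{0,\dots,N\}$; such walks are in bijection with pairs (rooted plane tree on $m$ edges encoded by a Dyck path $w\in D_m$, injective labelling of its $m$ non-root vertices). The crucial observation is that the walker sits at the root $0$ at step $k$ iff the Dyck path $w$ meets the $x$-axis at step $k$, and the exclusion of $k=n$ in the definition of $\chi$ matches the exclusion of $(2m,0)$ in \eqref{pidef}, whence $\chi(\vec i)=\xi(w)$. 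Combining $N(N-1)\cdots(N-m+1)/N^m\to 1$ with the summation over $w$ gives
\begin{equation*}
\Ee[Y_N]=\sum_{w\in D_m} d^{\xi(w)}+O(N^{-1})=\pi^{(d)}_m+O(N^{-1}).
\end{equation*}

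The variance is controlled by an analogous computation of $\Ee[Y_N^2]$ as a sum over pairs of walks $(\vec i,\vec j)$. The leading $O(1)$ contribution comes from pairs where each walk is a tree walk as above and the two underlying trees share no non-root vertex; by independence of the entries involved this equals $(\Ee[Y_N])^2+o(1)$. Every other pair either shares a non-root vertex (losing a free vertex costs a factor $N^{-1}$) or contains an edge of odd total multiplicity (forcing zero expectation); the uniform bound $\max(1,|d|)^{2n}$ on the $d$-weights and the moment assumption \eqref{moments} keep every remaining constant of order $O(1)$. Hence $\mathrm{Var}(Y_N)=o(1)$, and combined with the first-moment asymptotics this yields the $L^2$ convergence. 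The main obstacle is this second-moment bookkeeping, but it is a mild adaptation of the standard Wigner variance estimate for the entry $(e_0,e_0)$; the genuine combinatorial novelty is the identification $\chi(\vec i)=\xi(w)$, which is precisely the mechanism turning the Catalan numbers into the weighted sums $\pi^{(d)}_m$.
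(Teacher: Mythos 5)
Your proposal is correct and follows essentially the same route as the paper: the same moment expansion with the weight $d^{\#\{\text{visits to }0\}}$, the same key identification of that exponent with $\xi(w)$ for the Dyck path of the surviving equivalence class (the paper's formula \eqref{etaxi}), and the same second-moment computation showing $\Ee[Y_N^2]\to(\pi^{(d)}_{n/2})^2$ via pairs of vertex-disjoint tree walks (which the paper phrases as concatenated Dyck paths in $D_n^0$ with $\xi(w)=\xi(w_1)+\xi(w_2)$). No substantive differences.
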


A basis for our considerations is the combinatorial proof of the Wigner's result, mainly as presented in \cite{AGZ}. Before the proof of Theorem \ref{central} we review the classical proof, also  introducing notations needed later on. Lemma 2.1.6 of \cite{AGZ} shows that
$$
\Ee \frac{\tr (W_N)^n}N \to \begin{cases} c_{n/2}  &:\  n\text{ \rm even }\\
                                              0  &:\  n\text{ \rm  odd }
                                       \end{cases}
\quad (N\to \infty).
$$
The proof is based on passage to the limit in the formula
\begin{equation}\label{EE}
\Ee \frac{\tr (W_N)^n}N =\frac 1N\sum_{ i_1,i_2,...,i_n=0}^N\Ee W_N(i_1,i_2)W_N(i_2,i_3)...W_N(i_{n},i_1).
\end{equation}

To analyze the above expression the set $\mathcal{W}$ of all closed words $(i_1\dts i_n,i_1)$ over the alphabet $\set{1\dts N}$ is introduced.  On this set an equivalence relation $\sim$ is defined by saying that two words are equivalent if there exists a bijection on the alphabet that maps one word to the other. We denote the set of all equivalence classes by $[\mathcal{W}]_\sim$. The \textit{weight} of a word is the number of its distinct letters. Note that all words in one equivalence class $A\in[\mathcal{W}]_\sim$ have the same weight, and the number of elements of the class equals
$$
C_{N,t}=N(N-1)\cdots(N-t+1)=\mathcal{O}(N^t),
$$
where $t$ is the common weight of the words.

Observe that we can rewrite the right hand side of \eqref{EE} as

\begin{equation}\label{EWN}
\frac 1N\sum_{A\in [\mathcal{W}]_\sim} \sum_{ (i_1,i_2,...,i_n,i_1)\in A} \Ee W_N(i_1,i_2)W_N(i_2,i_3)...W_N(i_{n},i_1).
\end{equation}

It is shown in \cite[Subsection 2.1.3]{AGZ} that the number of equivalence classes is  bounded from above for all $N$. In consequence, the limit of \eqref{EWN} is zero for $n$ odd. For $n$ even the passage to infinity with $N$ in formula \eqref{EWN} is survived only by
the equivalence classes that are in one-to-one correspondence with Dyck paths, the canonical correspondence is described in \cite[p.15]{AGZ}. Each of such classes contains words of weight $n/2+1$ and consequently its power equals $C_{N,n/2+1}=\mathcal{O}(N^{n/2+1})$.
In consequence,
\begin{equation}\label{ND}
\lim_{N\to\infty} \Ee \frac{\tr (W_N)^n}N =  \lim_{N\to\infty} \frac1N\frac{1}{N^{n/2}}\sum_{w\in D_{n/2}} C_{N,n/2+1}  E(x_{1,2}^2)^{n/2}=
\end{equation}
$$
 \sum_{w\in D_{n/2}} 1 = c_{n/2}.
$$
Having this classical argument recalled,  we are ready to prove Theorem \ref{central}.

\begin{proof}[Proof of Theorem \ref{central}] The proof consists of three usual steps.

\textit{Step 1.}
$$
\Ee e_0^*\left(X_N^{(d)}\right)^{n}e_0 \to
\left\{
\begin{array}{rl}
\pi^{(d)}_{\frac{n}{2}} \textnormal{ , for n even, } \\
 0 \textnormal{ , for n odd}. \\
\end{array}
\right.
 $$
We begin the proof with an analogue of \eqref{EE}
$$
\Ee\left( e_0^* (X_N)^n e_0\right)  =\sum_{ i_1,i_2,...,i_{n-1}=0}^N\Ee X_N(0,i_1)X_N(i_1,i_2)...X_N(i_{n-1},0)
$$

 $$
=\sum_{ i_1,i_2,...,i_{n-1}=0}^N d^{\eta(0,i_1\dts i_{n-1})} \Ee W_N(0,i_1)W_N(i_1,i_2)...W_N(i_{n-1},0),
$$
where $\eta(j_1\dts j_k)$ is the number of zeros in the sequence $(j_1\dts j_k)$.
We introduce $\mathcal{ W}_0$ as the set of words over $\set{0\dts N}$ of the form $(0,i_1\dts i_{n-1},0)$.
Note that all words in one equivalence class $A\in[\mathcal{W}_0]_\sim$ have the same weight, and the number of elements of the class equals $C_{N,t-1}$, where $t$ is the common weight of the words.
Then, analogously to \eqref{EWN}, one has
\begin{equation}\label{EWN2}
\Ee e_0^*\left(X_N^{(d)}\right)^{n}e_0 =
\end{equation}
$$
\sum_{A\in [\mathcal{ W}_0]_\sim} \sum_{ (0,i_1,i_2,...,i_{n-1},0)\in A}d^{\eta(0,i_1\dts i_{n-1})} \Ee \left(W_N(0,i_1)W_N(i_1,i_2)...W_N(i_{n-1},0)\right).
$$
As in the proof of the Wigners result, the limit in the case $n$ is odd is zero, and in the  case $n$ is even the passage to infinity with $N$ in formula \eqref{EWN2} is survived only by
the equivalence classes that are in one-to-one correspondence with Dyck paths of order $n/2$. Each of such classes contains words of weight $n/2+1$ and consequently has precisely  $C_{N,n/2}$ elements.
Hence,
\begin{equation}\label{NDxi2}
\lim_{N\to\infty} \Ee e_0^*\left(X_N^{(d)}\right)^{n}e_0 =   \lim_{N\to\infty} \frac{1}{N^{n/2}}\sum_{w\in D_{n/2}} d^{\xi(w)}C_{N,n/2}  \Ee(x_{1,2}^2)^{n/2}.
\end{equation}
We used in the above equality the fact, that for all words $(0,i_1\dts i_{n-1},0)$ from class $A\in[\mathcal{W}_0]_\sim$ we have
\begin{equation}\label{etaxi}
\eta(0,i_1\dts i_{n-1})=\xi(w),
\end{equation}
where $w$ is the Dyck word corresponding to the class $A$. Indeed, in the canonical bijection described in \cite{AGZ} meeting of the Dyck path with the $x$-axes corresponds to a zero on the corresponding position in the word. Finally, by \eqref{NDxi2},
$$
\lim_{N\to\infty} \Ee e_0^*\left(X_N^{(d)}\right)^{n}e_0 =  \sum_{w\in D_{n/2}} d^{\xi(w)}=\pi_{n/2}.
$$

\textit{Step 2.}
$$
\Ee \left(e_0^*\left(X_N^{(d)}\right)^{n}e_0\right)^2 \to
\left\{
\begin{array}{rl}
 \big(\pi^{(d)}_{\frac{n}{2}}\big)^2 \textnormal{ , for n even, } \\
 0 \textnormal{ , for n odd}. \\
\end{array}
\right.
 $$
We start the proof similarly as in Step 1
\begin{eqnarray*}
& \Ee\left( e_0^* (W_N)^n e_0\right)^2  =\\
& \sum\limits_{ i_1,\dots,i_{n-1}=0}^N\sum\limits_{ j_1,\dots,j_{n-1}=0}^N\Ee \left( X_N(0,i_1)\cdots X_N(i_{n-1},0)   X_N(0,j_1)\cdots X_N(j_{n-1},0)\right)
\\
&=\sum\limits_{ i_1,\dots,i_{n-1}=0}^N\sum\limits_{ j_1,\dots,j_{n-1}=0}^N d^{\eta(0,i_1\dts i_{n-1},0,j_1\dts j_{n-1})}\\&
 \cdot\Ee  \left(W_N(0,i_1)\cdots W_N(i_{n-1},0)W_N(0,j_1)\cdots W_N(j_{n-1},0)\right).
\end{eqnarray*}
We introduce $\mathcal{ W}_{00}$ as the set of words over $\set{0\dts N}$ of the form
$$
(0,i_1\dts i_{n-1},0,j_1\dts j_{n-1},0).
$$
Note that the power of each equivalence class  $A\in[\mathcal{W}_{00}]_\sim$ equals $C_{N,t-1}$, where $t$ is the common weight of the words in $A$.
Furthermore,
\begin{equation}\label{EWN3}
\Ee \left(e_0^*\left(X_N^{(d)}\right)^{n}e_0\right)^2 =
\end{equation}
$$
\sum_{A\in [\mathcal{ W}_{00}]_\sim} \sum_{ I\in A}d^{\eta(0,i_1\dts i_{n-1},0,j_1\dts j_{n-1})}
$$
$$
\cdot \Ee  \left(W_N(0,i_1)\cdots W_N(i_{n-1},0)W_N(0,j_1)\cdots W_N(j_{n-1},0)\right),
$$
with
$$
I=(0,i_1\dts i_{n-1},0,j_1\dts j_{n-1},0).
$$
The passage with $N$ to infinity is again survived by the equivalence classes corresponding to Dyck paths $D^0_n$ of order $n$, which meet the $x$ axes after $n$ steps. Since there is no word in $\mathcal{W}_{00}$ of  length $2n+1$ corresponding to a Dyck path,   the limit with $N\to\infty$ of the expression above is zero for $n$ odd. For $n$ even the words in equivalence classes that survive passage $N\to\infty$ have weight $n+1$. Consequently
\begin{equation}\label{NDxi3}
\lim_{N\to\infty}\Ee \left(e_0^*\left(X_N^{(d)}\right)^{n}e_0\right)^2 =
\end{equation}
$$
  \lim_{N\to\infty} \frac{1}{N^{n}}\sum_{w\in D_{n}^0} d^{\xi(w)}C_{N,n}  E(x_{1,2}^2)^{n}=\sum_{w\in D_{n}^0} d^{\xi(w)}.
$$
Note that each Dyck path  $w\in D^0_n$ is a concatenation of two Dyck paths $w_1,w_2\in D_{n/2}$ and $\xi(w)=\xi(w_1)+\xi(w_2)$. Hence,
\begin{equation}\label{NDxi3-1}
\lim_{N\to\infty}\Ee \left(e_0^*\left(X_N^{(d)}\right)^{n}e_0\right)^2 =
\end{equation}
$$
 \sum_{w_1,w_2\in D_{n/2}} d^{\xi(w_1)+\xi(w_2)}=\left(\pi^{(d)}_{n/2}\right)^2.
$$
\textit{Step 3.}
The application  of Chebyshev's inequality finishes the proof.

\end{proof}


\section{Representation and convergence   of the Weyl functions}\label{S3}
As it was mentioned in the Introduction, we define the Weyl function as
\begin{equation}\label{QNd}
Q_{(d)}^N(z)=-\left(e_0^*H_N^{(d)}\left(X^{(d)}_N-z\right)^{-1}e_0\right)^{-1},
\end{equation}
where $e_0$ is the first vector of the canonical basis of $\Comp^{N+1}$ and $H_N^{(d)}$, $X_N^{(d)}$ are defined as in Section \ref{S2}. It is clear that the zeros of $Q_{(d)}^N(z)$ are eigenvalues of
 $X_N^{(d)}$, the converse is not necessarily true. Nevertheless, the Weyl function will allow us to determine the part of the spectrum of $X_N^{(d)}$ lying in $\Comp\setminus[-2,2]$ for large values of $N$.
For this aim we prove  that ${Q_{(d)}^N(z)}$ converges in  probability to the function
\begin{equation}\label{Qd}
{Q_{(d)}(z)}:= \hat\sigma(z) + \frac zd=\frac{(2-d)z+d\sqrt{z^2-4}}{2d},
\end{equation}
where $\sigma$ is  the Wigner semicircle measure.

The function $\frac{-1}{Q_{(d)}(z)}$ is for $d>0$ an ordinary Nevanlinna function and for $d<0$ a generalized Nevanlinna function with one negative square.
In both cases the general theory (see e.g. \cite{AG,KL79}) admits an expansion at infinity. More precisely,  we have  the following.

\begin{lemma}\label{lmexp}
For $Q_{(d)}(z)$ defined by \eqref{Qd} one has
\begin{equation}\label{expQ}
\frac{-1}{Q_{(d)}(z)}=-\frac{d}{z} \sum_{n=0}^\infty \frac{1}{z^{2n}}\pi_n^{(d)},
\end{equation}
where the series absolutely converges for $z\in\Comp$ such that
$$
|z|>\begin{cases} 2  &: |d|<2 \\  \frac{|d|}{\sqrt{|d-1|}}   &: |d|\geq 2  \end{cases}.
$$
\end{lemma}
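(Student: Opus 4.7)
The plan is to reduce the claim directly to the closed form for the generating function established in Proposition \ref{gener}: the identity will follow by substituting $w = 1/z^2$, and the convergence region will be read off from the location of the singularities of $G_{(d)}(w)$ in the $w$-plane, translated back through the same substitution.

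For the identity, set $w=1/z^2$. When $|z|>2$ the argument $1-4/z^2$ stays off $(-\infty,0]$, and on the principal branch one has $\sqrt{1-4/z^2} = \sqrt{z^2-4}/z$, where $\sqrt{z^2-4}$ denotes the branch analytic on $\Comp\setminus[-2,2]$ with $\sqrt{z^2-4}/z\to 1$ at infinity. Feeding this into $G_{(d)}(w)=2/(2-d+d\sqrt{1-4w})$ and simplifying yields
$$
-\frac{d}{z}\,G_{(d)}\!\left(\frac{1}{z^2}\right) \;=\; \frac{-2d}{(2-d)z+d\sqrt{z^2-4}} \;=\; -\frac{1}{Q_{(d)}(z)},
$$
which is exactly \eqref{expQ}, interpreted wherever the underlying power series converges absolutely.

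For the convergence region I would inspect the singularities of $G_{(d)}(w)$ directly from its closed form. There are at most two: the branch point $w=1/4$ of $\sqrt{1-4w}$ and the pole $w_0=(d-1)/d^2$ obtained by solving $\sqrt{1-4w}=(d-2)/d$. The pole lies on the principal sheet exactly when $(d-2)/d\geq 0$, i.e.\ when $d\geq 2$ or $d<0$; for $0<d<2$ the pole is spurious and the branch point is the only singularity. A direct comparison of $|w_0|=|d-1|/d^2$ with $1/4$ (treating $d>0$ and $d<0$ separately) shows that the branch point is the closer singularity when $|d|<2$, giving $R_d=1/4$, while the pole is the closer singularity when $|d|\geq 2$, giving $R_d=|d-1|/d^2$. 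Undoing $w = 1/z^2$ converts $|w|<R_d$ into $|z|>1/\sqrt{R_d}$ and reproduces the thresholds $2$ and $|d|/\sqrt{|d-1|}$ of the statement.

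The only delicate point is the branch bookkeeping, needed to confirm that $w_0$ is a genuine pole of the principal-branch formula for $G_{(d)}$ and not a spurious root introduced by squaring; once the sign condition $(d-2)/d\geq 0$ is verified at $w_0$, this is automatic, and the remainder of the argument is elementary algebra.
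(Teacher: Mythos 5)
Your derivation of the identity \eqref{expQ} is essentially the paper's: substitute $w=1/z^2$ into the closed form of $G_{(d)}$ from Proposition \ref{gener} and match the branches of the square root. For the domain of convergence the paper argues in the $z$-plane (the singularities of $-1/Q_{(d)}$ are the cut $[-2,2]$ and the zeros $z^{\pm}_{(d)}$ of $Q_{(d)}$), whereas you argue in the $w$-plane (the branch point $w=1/4$ and the pole $w_0=(d-1)/d^2$ of $G_{(d)}$); under $w=1/z^2$ these are the same singularities, so the two routes are equivalent in substance.

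The gap is in your ``direct comparison'': the claim that the pole is the closer singularity whenever $|d|\ge 2$ is false for negative $d$. For $d<0$ one has $|w_0|=(1-d)/d^2$, and $|w_0|\le 1/4$ is equivalent to $d^2+4d-4\ge 0$, i.e.\ to $d\le -2-2\sqrt{2}$, not to $d\le -2$. Hence for $-2-2\sqrt{2}<d\le -2$ the branch point $w=1/4$ is strictly closer to the origin than the pole, the radius of convergence of $\sum_n\pi_n^{(d)}w^n$ is $1/4$, and the series in \eqref{expQ} diverges for $|d|/\sqrt{|d-1|}<|z|<2$. (For $d=-2$ the stated threshold is $2/\sqrt{3}<2$; note also that the annulus $|z|>2/\sqrt{3}$ meets the cut $[-2,2]$, so the sum of the series could not equal $-1/Q_{(d)}$ there in any case.) Your framework, carried out honestly, therefore does not reproduce the stated case split --- it corrects it: the threshold is $2$ unless $d\ge 2$ or $d\le -2-2\sqrt{2}$, in which case it is $|d|/\sqrt{|d-1|}$. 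The same slip is in the lemma as stated and is glossed over in the paper's one-line convergence argument, since holomorphy of $-1/Q_{(d)}$ on $\Comp\setminus\left([-2,2]\cup\{z^{\pm}_{(d)}\}\right)$ yields the threshold $\max\left(2,|z^{\pm}_{(d)}|\right)$, and for $d<0$ one has $|z^{\pm}_{(d)}|=|d|/\sqrt{1-d}<2$ exactly when $-2-2\sqrt{2}<d<0$. As written, the step ``a direct comparison shows'' asserts something the comparison does not show; you should either prove the corrected threshold or flag the discrepancy with the statement.
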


\begin{proof}
Note that by Proposition \ref{gener} the right hand side of \eqref{expQ} equals
$$
-\frac{d}{z} G_{(d)}\left( \frac1{z^2}\right)=\frac{-2d}{(2-d)z+d\sqrt{z^2-4}}=\frac{-1}{Q_{(d)}(z)}.
$$
The claim on convergence follows from the fact that the function $\frac{-1}{Q_{(d)}(z)}$ is holomorphic in $\Comp\setminus\left([-2,2]\cup\{z^\pm_{(d)}\}\right)$ for
$$
z^{\pm}_{(d)}=\begin{cases} \pm \frac{d}{\sqrt{1-d}}\ii  &: d<0  \\  \pm \frac d{\sqrt{d-1}}   &: d > 2  \\ 0 &: d\in [0,2]  \end{cases},
$$
see Introduction.
\end{proof}

We  formulate now the second main theorem of the paper.  Statement (i) is a new result,  (ii), (iii) and (iv) were already proved in \cite{Wojtylak12b} with a different method that allowed to 
omitt the assumption \eqref{rk}.  A perturbation problem, similar to the one discribed in (v),  is widely discussed in the literature, see e.g. \cite{BG1,BG2,BG3,cap1,capitaine,FP,knowles}. Nevertheless, (v)  is stated  for completness of the analysis of the change of the spectrum of $X_N^{(d)}$ with the parameter $d$. 

\begin{theorem}\label{probexpansion}
Assume that $X_N^{(d)}$ satisfies the probabilistic assumption of Section \ref{S2}, then
\begin{itemize}
\item[(i)] the  function ${Q_{(d)}^N(z)}$ defined by \eqref{QNd} has the following representation
$$
{Q_{(d)}^N(z)}=-a_N+\frac zd+\hat\mu_N(z),
$$
where $a_N$ is a real, random variable, $a_N\to 0$ with $N\to 0$ in probability, $\mu_N$ is a random, discrete, probablility measure on $\Real$ and 
$$
\int_{\Real} t^n d\mu_N(t)\to \int_{\Real} t^n d\sigma(t),\quad n=0,1,\dots,
$$
with $N\to \infty$ in probability.
\end{itemize}
If, additionally, the moments $r_k$ defined by \eqref{moments} satisfy 
\begin{equation}\label{rk}
r_k\leq k^{Ck},\quad k=1,2,\dots,
\end{equation}
 for some constant $C\geq 0$  then 
\begin{itemize}
\item[(ii)] $\mu_N\to\sigma$ with $N\to\infty$ weakly in probability;
\item[(iii)] for $z\in\Comp^+$ the number ${Q_{(d)}^N(z)}$
 converges in probability with $N\to \infty$   to
${Q_{(d)}(z)}$;
\item[(iv)] if $d<0$  the $($unique$)$ eigenvalue of nonpositive type of $X_N^{(d)}$ converges in probability to $z_{(d)}^+$;
\item[(v)] if $d>2$  then the minimal $($maximal $)$ eigenvalue of $X_N^{(d)}$ converges with $N\to\infty$ to $z_{(d)}^- (z_{(d)}^+$, respectively $)$. 
\end{itemize}
\end{theorem}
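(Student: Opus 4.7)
For part (i), the plan is a Schur-complement decomposition of $X_N^{(d)}-z$ along its first row and column. Writing $\eta=(x_{01}\dts x_{0N})^T$ and letting $\tilde W_N$ be the $N\times N$ lower-right Wigner block (independent of $(x_{00},\eta)$), the block-inversion formula for the $(0,0)$-entry of $(X_N^{(d)}-z)^{-1}$ yields, after rearrangement,
$$
Q_{(d)}^N(z)=-\frac{x_{00}}{\sqrt N}+\frac{z}{d}+\frac{1}{N}\eta^T(\tilde W_N-z)^{-1}\eta.
$$
Diagonalising $\tilde W_N=UDU^*$ and setting $u=U^*\eta$, the last term is the Stieltjes transform of the discrete positive random measure $\mu_N=\frac{1}{\|\eta\|^2}\sum_k|u_k|^2\delta_{\lambda_k}$ (a probability measure, whose total mass $\|\eta\|^2/N\to 1$ in probability by the weak law); absorbing $x_{00}/\sqrt N$ together with the small $z$-independent renormalisation factor into $a_N$ produces the representation of (i). Convergence $\int t^n\,d\mu_N\to\int t^n\,d\sigma$ in probability reduces, by the independence of $\eta$ from $\tilde W_N$ and $\Ee x_{0j}^2=1$, to Wigner's theorem for $\frac{1}{N}\tr(\tilde W_N^n)$, supplemented by a standard second-moment trace bound.

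For (ii), I would apply the method of moments. The assumption \eqref{rk} feeds into Wigner-type combinatorial estimates to give $N$-uniform bounds on $\Ee\int t^{2k}\,d\mu_N$ at every order $k$, hence tightness of $(\mu_N)$ in probability. Since $\sigma$ is determined by its moments, every weak limit of $\mu_N$ equals $\sigma$, so $\mu_N\to\sigma$ weakly in probability. Part (iii) is then immediate: for fixed $z\in\Comp^+$ the functional $\mu\mapsto\hat\mu(z)$ is bounded and continuous on probability measures in the weak topology, so $\hat\mu_N(z)\to\hat\sigma(z)$ in probability, and combined with $a_N\to 0$ this yields $Q_{(d)}^N(z)\to Q_{(d)}(z)$.

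For (iv) and (v), the plan is a Hurwitz argument applied to the holomorphic family $\{Q_{(d)}^N\}$. By Lemma \ref{lmexp} and the discussion in the Introduction, $Q_{(d)}$ has precisely two simple zeros $z_{(d)}^\pm$ off $[-2,2]$. Pointwise-in-probability convergence from (iii), together with $N$-uniform boundedness of $Q_{(d)}^N$ on compact subsets of $\Comp\setminus[-2,2]$ extracted from (i) and from the operator-norm estimate $\|\tilde W_N\|\to 2$ in probability (again requiring \eqref{rk}), will upgrade to locally uniform convergence in probability via a Vitali--Porter argument. Hurwitz's theorem then places, with probability tending to $1$, a zero of $Q_{(d)}^N$ in any small disc about $z_{(d)}^\pm$; by the Schur-complement identity each such zero is an eigenvalue of $X_N^{(d)}$. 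For (iv), the indefinite linear algebra results of \cite{GLR} ensure that the $H_N^{(1/d)}$-selfadjoint matrix $X_N^{(d)}$ carries at most one eigenvalue of non-positive type, which must coincide with the Hurwitz zero in $\Comp^+$. For (v), the operator-norm bound on $\tilde W_N$ confines all other eigenvalues of $X_N^{(d)}$ to a small neighbourhood of $[-2,2]$, so the two Hurwitz zeros at $z_{(d)}^\pm$ are necessarily the extremal eigenvalues.

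The principal technical obstacle I anticipate is the passage from pointwise to locally uniform convergence in probability, needed to invoke Hurwitz in (iv)--(v): this requires an $N$-uniform a priori bound on $Q_{(d)}^N$ off $[-2,2]$, which in turn demands uniform control of $\|(\tilde W_N-z)^{-1}\|$ away from $[-2,2]$. The moment hypothesis \eqref{rk} is used here in its sharpest form to concentrate $\|\tilde W_N\|$ near $2$. A secondary difficulty is excluding outlier eigenvalues of $X_N^{(d)}$ not tracked by $Q_{(d)}^N$: in the regime $d<0$ this follows from the signature constraint of \cite{GLR}, and in the regime $d>2$ from the operator-norm bound on $\tilde W_N$ together with a min--max argument.
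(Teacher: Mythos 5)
Your decomposition in (i) is the same Schur-complement step the paper uses, but from there you take a genuinely different route to the moment convergence: you condition on the lower-right Wigner block and reduce $\int t^n\,d\mu_N=\frac1N\eta^T\tilde W_N^n\eta$ to $\frac1N\tr(\tilde W_N^n)$ plus a variance bound for the quadratic form. The paper instead deduces the moments of $\mu_N$ from its Theorem \ref{central} (the convergence of $e_0^*(X_N^{(d)})^ne_0$ to $\pi^{(d)}_{n/2}$): it expands both $Q_{(d)}^N$ and $-1/Q_{(d)}^N$ in Laurent series at infinity, notes that the coefficient sequences satisfy the Cauchy-product identities \eqref{C1}--\eqref{C2}, and transfers the convergence $\gamma_n^N\to\gamma_n$ to $\alpha_n^N\to\alpha_n$ by induction. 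Your argument is more standard and self-contained; the paper's is designed to exhibit the combinatorial content, which is the point of the article. For (ii) you invoke tightness and moment-determinacy of $\sigma$ where the paper uses the support confinement $\Pp(\supp\mu_N\sbs[-3,3])\to1$ from \cite[Theorem 2.1.22]{AGZ}; for (iv)--(v) you use Hurwitz plus a normal-families upgrade to locally uniform convergence, where the paper argues via continuity of the relevant spectral quantities as functions of the pair $(\mu_N,a_N)$ in the weak topology (citing \cite{Wojtylak12b} for the eigenvalue of nonpositive type, and an explicit argument on a set $M$ of pairs subject to a support constraint for (v)). These substitutions are all workable; note that the support/norm control from \cite[Theorem 2.1.22]{AGZ} --- which is where \eqref{rk} enters --- is indispensable in your route as well, both to bound $Q_{(d)}^N$ on compacts off $[-2,2]$ (poles of $\hat\mu_N$ sit at eigenvalues of the lower-right block) and to confine the non-outlier eigenvalues in (v), since $\det(X_N^{(d)}-z)$ factors as $\det(C_N-z)$ times the Schur complement.

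One genuine slip: you cannot ``absorb the renormalisation factor into $a_N$.'' The third term of the decomposition is the Stieltjes transform of $\nu_N=\frac1N\sum_k|u_k|^2\delta_{\lambda_k}$, whose total mass is $\|\eta\|^2/N$; replacing $\nu_N$ by its normalized version multiplies $\hat\nu_N(z)$ by a constant, which is a multiplicative correction to a $z$-dependent function and cannot be hidden in the $z$-free additive term $a_N$. The clean statement (and what the paper actually establishes, despite the word ``probability'' in the statement of (i)) is that $\mu_N$ is a positive discrete random measure with $\mu_N(\Real)=\|\eta\|^2/N\to1$ in probability; its zeroth moment then converges to $\sigma(\Real)=1$ as required, and nothing else in your argument is affected.
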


\begin{proof}(i) Writing $X^{(d)}_N$ as
$$
\begin{bmatrix}
   da_N & db_N^* \\
   b_N & C_N\\
\end{bmatrix}$$
and using the Schur complement argument we see, that
$$
Q_{(d)}^N(z)=-\left(e_0^*H_N^{(d)}(X_N^{(d)}-z)^{-1}e_0\right)^{-1}= -a_N+\frac zd+b_N^*(C_N-z)^{-1}b_N.
$$
Observe that with a discrete,  random measure 
$$
\mu_N: = \sum_{j=1}^N  | f_{j}^N |^2 \delta_{\lambda^N_j},
$$
where $U_NC_NU_N^*=\diag(\lambda^N_1\dts \lambda^N_j)$ is the unitary diagonalization of $C_N$ and  $f_N=[f^N_1\dts f^N_N]^\top=U_Nb_N$, one has
\begin{equation}\label{Qdform}
Q_{(d)}^N(z)=\frac zd-a_N+\hat\mu_N(z).
\end{equation}
Expanding the Stieltjes transform of $\hat\mu(z)$ at infinity 
$$
\hat\mu_N(z)=\sum_{n=1}^\infty \alpha_n^N z^{-n}
$$ 
with (random) real coefficients 
$$
\alpha_n^N=\int t^{n-1} d\mu_N(t),\quad n=1,2\dots.
$$
Consequently, we obtain the following Laurent series expansion at infinity of $Q_{(d)}^N(z)$
 \begin{equation}\label{Lexp}
Q_{(d)}^N(z)=\sum_{n=-1}^\infty \alpha_n^N{z^{-n}},
\end{equation}
with $\alpha_{-1}^N:=\frac1d$, $\alpha_0^N:=-a_N$ ($N=1,2,\dots$). 
 On the other hand consider the expansion of $-1/Q^N_{(d)}(z)$
\begin{equation}\label{Linvexp}
-1/Q^N_{(d)}(z)=d\ e_0^*\left(X^{(d)}_N-z\right)^{-1}e_0=
 \sum_{n=1}^\infty \gamma_n^N  {z^{-n}},
\end{equation}
with
$$
\gamma_n^N:=-d\ e_0^* (X^{(d)}_N)^{n-1} e_0 .
$$
Observe that, by the Cauchy product rule, the random sequences $(\alpha_n^N)_{n=-1}^\infty$, $(\gamma_n^N)_{n=1}^\infty$  satisfy surely for each $N=0,1,\dots$ the following equalities 
\begin{equation}\label{C1}
\alpha_{-1}^N\gamma_1^N=1,\qquad \sum_{i=0}^k \alpha_{i-1}^N \gamma_{k-i+1}^N=0\quad  k=1,2,\dots.
\end{equation}
Furthermore, the nonrandom sequences  $(\alpha_n)_{n=-1}^\infty$, $(\gamma_n)_{n=1}^\infty$ defined by  $\alpha_{-1}:=\frac1d$, $\alpha_0:=0$
$$
 \alpha_n:= \begin{cases} c_{(n-1)/2}  &:\  n\text{ \rm odd }\\
                                                                   0  &:\  n\text{ \rm  even }
                                       \end{cases} \quad 
                                     \gamma_n:= \begin{cases} \pi^{(d)}_{(n-1)/2}  &:\  n\text{ \rm odd }\\
                                                                   0  &:\  n\text{ \rm  even }
                                       \end{cases},\quad n=1,2,\dots
$$
 satisfy by Lemma \ref{lmexp}
\begin{equation}\label{C2}
\alpha_{-1}\gamma_1=1,\qquad \sum_{i=0}^k \alpha_{i-1} \gamma_{k-i+1}=0,\quad  k=1,2\dots.
\end{equation}
By Theorem \ref{central} , for each $n=0,1,\dots$ 
\begin{equation}\label{gammaN}
\gamma_n^N \to\gamma_n\ (N\to\infty),\ \text{in probability}.
\end{equation} 
Emploing \eqref{C1}, \eqref{C2} and  \eqref{gammaN} in a simple induction argument with respect to $n$  we obtain that
for each $n=0,1,\dots$ 
$$
\alpha_n^N\to  \alpha_n\ (N\to\infty),\ \text{in probability}.
$$
Since $\alpha_n=\int_\Real t^{n-1} d\sigma(t)$ the proof of (i) is complete.

(ii) Note that by \cite[Theorem 2.1.22]{AGZ} $$
\mathbb{P}(\supp \mu_N\sbs[-3,3])\to 1, \quad N\to\infty.
$$
Consequently, for each $\eps>0$ 
$$
 \lim_{N\to\infty} \mathbb{P} \left( \int_{|x|>3} |x|^k d\mu_N> \eps \right) \to0.
$$
A standard approximation argument (cf. e.g. \cite{AGZ}  formula (2.1.9) and below)
shows that $\mu_N$ converges to $\sigma$ weakly in probablity.
This, together with \eqref{Qd} and \eqref{Qdform}, finishes the proof of (ii).

Statement (iii) follows directly from (ii), statement (iv) follows from (ii) and the continuity of the eigenvalue of nonpositive type as a function of $\mu_N$ and $a_N$, see \cite{Wojtylak12b}. 
To see (v) assume $d>2$. Let $M$ denote the set of pairs $(\mu,a)$ of a positive, finite measures $\mu$ on $\Real$  with 
\begin{equation}\label{suppcond}
\supp\mu\sbs\left[\frac{ -2+z_{(d)}^-}2,\frac{ 2+z_{(d)}^+}2\right ],
\end{equation}
and $a\in\Real$ such that the equation  $a+\frac zd +\hat\mu(z)=0$  has a  solution in each of the half axes $(-\infty,\min \supp\mu)$, $(\max\supp\mu, +\infty)$. Note that these both solutions are 
are necessarily unique, we denote them by $\zeta_N^-$ and $\zeta_N^+$, respectively. 
Note that $(\sigma,0)\in M$ and that the pair $(\mu_N,a)$ belongs to $M$ for some (equivalently: for every) $a\in\Real$ if and only if  \eqref{suppcond} is satisfied.  
We endow $M$ with the topology inherited from the product of the weak and natural topology. Thanks to \eqref{suppcond} the mapping  $M\ni(\mu,a)\mapsto \zeta_N^-$ is continuos at $(\sigma,0)$.

Let $\Xi_N$ stand for the event that $\mu=\mu_N$ satisfies \eqref{suppcond}. Observe that 
$$
P\left(|\zeta_N^--z^-_{(d)}|>\eps\right) \leq P\left(\Xi_N^c\right)+P\left(\Xi_N,|\zeta_N^--z^-_{(d)}|>\eps \right),
$$
and the first summand converges with $N\to \infty$ to zero by \cite[Theorem 2.1.22]{AGZ}. The second summand converges to zero since, by (i), $(\mu_N,a_N)$ converges to $(\sigma,0)$ in $M$.
Thanks to the afore-mentioned continuity at $(\sigma,0)$,  $\zeta_N^-$ converges in probability to $z_{(d)}^-$. Analogously one obtains the convergence of $\zeta_N^+$.

\end{proof}



\section{Final remarks}\label{S4}

One should mention that infinite tridiagonal matrices of the form
\begin{equation}\label{op}
J_{(d)}=\matp{
\phantom{\ddots} & \sqrt d &&\\ \sqrt d & \phantom{\ddots} & 1 \\ & 1 &\phantom{\ddots}& 1\\ && 1 && \ddots \\ &&& \ddots}
\end{equation}
are in some sense operator analogues of the matrices $H_N^{(\sqrt d)}W_N H_N^{(\sqrt d)}$,  namely the functions $e_0^*(J_{(d)}-z)^{-1}e_0$ and $-1/Q_{(d)}(z)$ coincide. The family \eqref{op} was studied in \cite{jwys05}, see therein and  \cite{bozjwys98,bozjwys01} for a relation with
noncommutative probability and $t$--transformations of convolutions. Also the representation of the function $-1/Q_{(d)}(z)$ for $d>0$ as a Stieltjes transform of a measure was derived in \cite{jwys05}.

However, for random matrices, contrary to the operator case \eqref{op}, the function $-1/Q_{(d)}(z)$ does \textit{not} provide information about the limit of the empirical measure of eigenvalues. Namely, using the intertwining principle one may easily show that the empirical measure of spectrum of the matrix $X^{(d)}_N$ converges for all $d>0$ to the Wigner measure $\sigma$, while $-1/Q_{(d)}(z)\neq\hat \sigma(z)$.  

Now let us discuss the limitations of the  combinatorial methods in computing the spectra of $H$-selfadjoint Wigner matrices. For a fixed $k\in\Nat$ consider the following matrices
$$
H_N^{(d),k}=\begin{bmatrix}
   I_k & 0 & 0 \\
   0 & d & 0\\
   0 & 0 & I_{N-k}
\end{bmatrix}\in\Comp^{(N+1)\times(N+1)},\quad  N=k,k+1,\dots.
$$
First observe that the spectrum of $X^{(d),k}_N=H_N^{(d),k}W_N$ coincides for large $N$ with the spectrum of $X^{(d)}_N$. Indeed, the unitary matrix
$$U_{k,N}=\begin{bmatrix}
   0 & 0 & 1 & 0 \\
   0 & I_{k-1} & 0&0\\
   1 & 0 & 0 & 0 \\
   0 & 0 & 0 & I_{N-k}
\end{bmatrix}.$$
satisfies $U_{k,N}H_N^{(d)}U_{k,N}=H_N^{(d),k}$. Hence, the spectrum of $X_N^{(d),k}$ coincides with the spectrum of $H_N^{(d)}U_{k,N}W_NU_{k,N}$.
Since $U_{k,N}W_NU_{k,N}$ is again Wigner matrix, we get that the limit distributions of real eigenvalues as well as the limit of the eigenvalue of nonpositive type of $X_N^{(d),k}$ coincide with, respectively, the limit distributions of real eigenvalues and the limit of the eigenvalue of nonpositive type of $X_N^{(d)}$.

However,  for the above matrices the combinatorial interpretation given  in the  proof of Theorem \ref{central} is no longer true. Namely, repeating the proof 
it is not possible to show that after dividing into equivalence classes and passing to the limit each Dyck path $w\in D_n$ contributes precisely $d^{\xi(w)}$ to the total sum, since formula \eqref{etaxi} is no longer true.
Although for the above ensemble of matrices $X_N^{(d),k}$ this does not seem to be a large drawback, the real problem appears while considering  matrices as
$$
H_N^{(d_1\dts d_\kappa)}= \diag(d_1\dts d_\kappa)\oplus I_{N-\kappa +1}
, \quad X_N^{(d_1\dts d_\kappa)}=H_N^{(d_1\dts d_\kappa)}W_N.
$$
For the calculation of spectrum of those matrices one needs to develop a different method, the topic will be treated in a subsequent paper.


\begin{thebibliography}{99}


\bibitem{AG} N.I. Akhiezer, \textit{The classical moment problem and some related questions in analysis}, (English translation)  Hafner Publishing Co., New York 1965.

\bibitem{AGZ} G.W. Anderson, A. Guionnet, O. Zeitouni, \textit{An Introduction to Random Matrices}, Cambridge University Press 2010.

\bibitem{BH} P. Barry, A. Hennessy, \textit{Generalized Narayana Polynomials, Riordan Arrays, and Lattice Paths}, Journal of Integer Sequences, Vol. 15, 2012, 12.4.8.

\bibitem{BG1} F. Benaych-Georges, A. Guionnet, and M. Ma\"ida, Fluctuations of the extreme eigenvalues of finite rank
deformations of random matrices,   \textit{Electron. J. Probab.}, 16 (2011), 1621--1662.

\bibitem{BG2} F. Benaych-Georges, A. Guionnet, and M. Ma\"ida, Large deviations of the extreme eigenvalues of random deformations of matrices, \textit{Prob. Theor.
Rel. Fields} (2010), 1--49.

\bibitem{BG3} F. Benaych-Georges and R.R. Nadakuditi, The eigenvalues and eigenvectors of finite, low rank perturbations of large random matrices, \textit{Adv. Math.} 227 (2011), 494--521.



 \bibitem{BSp96} M. Bożejko, R. Speicher,  Interpolations between bosonic and fermionic relations given by generalized Brownian motions, \textit{ Math. Z.}, 222 (1996), 135–159.


\bibitem{bozjwys98} M. Bo\.zejko, J. Wysocza\'nski, New examples of convolutions and noncommutative central limit theorems, \textit{Banach Center Publ.}, 43 (1998), 95--103.

\bibitem{bozjwys01}  M. Bo\.zejko, J. Wysocza\'nski, Remarks on $t$--transformations of measures and convolutions, \textit{Ann. I. H. Poincar\'e}, 6 (2001), 737--761.

\bibitem{cap1} M. Capitaine, C. Donati-Martin, and D. F\'eral, The largest eigenvalues of finite rank deformation of large Wigner matrices: convergence and
nonuniversality of the fluctuations, \textit{Ann. Prob.} 37 (2009), 1--47.


\bibitem{capitaine} M. Capitaine, C. Donati-Martin, D. F\'eral and M. F\'evrier, Free convolution with a semicircular distribution and eigenvalues of spiked deformations of Wigner matrices, \textit{Electronic Journal of Probability},  16 (2011), 1750--1792.








\bibitem{DHS1}
        V. Derkach, S. Hassi, and H.S.V. de Snoo,
        Operator models associated with Kac subclasses of
        generalized Nevanlinna functions,
       \textit{ Methods of Functional Analysis and Topology}, 5 (1999), 65--87.

\bibitem{DHS3}
        V.A. Derkach, S. Hassi, and H.S.V. de Snoo,
        Rank one perturbations in a Pontryagin space with one
        negative square, \textit{ J. Functional Analysis}, 188 (2002), 317--349.





 \bibitem{FP} D. F\'eral and S. P\'ech\'e, The largest eigenvalue of rank one deformation of large Wigner matrices, \textit{Comm.
Math. Phys.} 272 (2007), 185--228.

\bibitem{GLR}
I. Gohberg, P. Lancaster and L. Rodman: \textit{Indefinite Linear Algebra and Applications}.
Birkh\"auser--Verlag, 2005.


 \bibitem{knowles} A. Knowles, J. Yin, The Isotropic Semicircle Law and Deformation of Wigner Matrices,
arXiv:1110.6449v2.

\bibitem{KL71}
        M.G. Kre\u{\i}n and H. Langer,
         The defect subspaces and generalized resolvents of a Hermitian operator in the space $\Pi _{\kappa
         }$,
  \textit{Funkcional. Anal. i Prilo\u zen.}  5  (1971), 54--69 [Russian].


\bibitem{KL77}
        M.G. Kre\u{\i}n and H. Langer,
        \"Uber einige Fortsetzungsprobleme, die eng mit der Theorie
        hermitescher operatoren im Raume $\Pi _\kappa $ zusammenhangen.
        I. Einige Funktionenklassen und ihre Dahrstellungen,
        \textit{Math. Nachr.}, 77 (1977), 187--236.


 \bibitem{KL79} M.G. Kre\u{\i}n and H. Langer,  On some extension problems which are closely connected with the theory of Hermitian operators in a space $\Pi_\kappa$. III. Indefinite analogues of the Hamburger and Stieltjes moment problems, \textit{Beiträge Anal.},  Part I.  14 (1979), 25--40, Part II. 15 (1980), 27--45.


\bibitem{KL81}
        M.G. Kre\u{\i}n and H. Langer,
        Some propositions on analytic matrix functions related to
        the theory of operators in the space $\Pi _\kappa $,
        \textit{Acta Sci. Math.} (Szeged), 43 (1981), 181--205.







\bibitem{LaLuMa}
       H. Langer, A. Luger, V. Matsaev, Convergence of generalized Nevanlinna functions, \textit{Acta Sci. Math.} (Szeged), 77 (2011), 425--437.





\bibitem{Pizzo} A. Pizzo, D. Renfrew,  A. Soshnikov, On finite rank deformations of Wigner matrices,  \textit{Ann. Inst. Henri Poincar\'e Probab. Stat.} 49 (2013),  64--94. 

\bibitem{Pizzo2}  A. Pizzo, D. Renfrew,  A. Soshnikov, Fluctuations of matrix entries of regular functions of Wigner matrices, \textit{ J. Stat. Phys.} 146 (2012),  550--591. 



\bibitem{OEIS} N. J. A. Sloane, The On-Line Encyclopedia of Integer Sequences, published electronically at
 http://oeis.org/



\bibitem{SWW}
H.S.V. de Snoo, H. Winkler, M. Wojtylak, Zeros and poles of nonpositive type of  Nevanlinna
functions with one negative square, \textit{J. Math. Ann. Appl.}, 382 (2011), 399--417.


  \bibitem{wigner} E. P. Wigner. Characteristic vectors of bordered matrices with infinite dimensions.
\textit{Annals Math.}, 62(1955), 548--564.


\bibitem{Wojtylak12b}M. Wojtylak, On a class of $H$--selfadjoint random matrices with one eigenvalue of nonpositive type. \textit{Electron. Commun. Probab.},  17(2012), no. 45, 1--14.


\bibitem{jwys05} J. Wysocza\'nski, Monotonic Independence on the Weakly Monotone Fock Space and Related Poisson Type Theorem, \textit{ Infinite Dimensional Analysis, Quantum Probability and Related Topics}, 8 (2005), 259--275.

\bibitem{jwys05b} J. Wysocza\'nski, A $d$--deformation of Free Gaussian Random Variable,  \textit{ Infinite Dimensional Analysis, Quantum Probability and Related Topics}, 8 (2005), 669--680.


\end{thebibliography}
\end{document}